\documentclass{amsart}
\usepackage{mathptmx, bm, amscd, amssymb, enumerate, colonequals, mathdots, xcolor}
\definecolor{chianti}{rgb}{0.6,0,0}
\definecolor{meretale}{rgb}{0,0,.6}
\definecolor{leaf}{rgb}{0,.35,0}
\usepackage[colorlinks=true, pagebackref, hyperindex, citecolor=meretale, urlcolor=leaf, linkcolor=chianti]{hyperref}

\newtheorem{theorem}{Theorem}[section]
\newtheorem{lemma}[theorem]{Lemma}
\newtheorem{corollary}[theorem]{Corollary}
\newtheorem{proposition}[theorem]{Proposition}

\theoremstyle{definition}

\newtheorem{question}[theorem]{Question}
\numberwithin{equation}{theorem}
\newtheorem{examplex}[theorem]{Example}
\newenvironment{example}{\pushQED{\qed}\examplex}{\popQED\endexamplex}

\def\rank{\operatorname{rank}}

\def\sat{\operatorname{sat}}

\def\Hom{\operatorname{Hom}}

\def\fraka{\mathfrak{a}}
\def\frakb{\mathfrak{b}}
\def\frakc{\mathfrak{c}}
\def\frakm{\mathfrak{m}}

\def\frakp{\mathfrak{p}}

\def\CC{\mathbb{C}}
\def\FF{\mathbb{F}}
\def\FFp{\mathbb{F}_{\!p}}

\def\ZZ{\mathbb{Z}}

\def\ge{\geqslant}
\def\le{\leqslant}
\def\phi{\varphi}
\def\bar{\overline}
\def\tilde{\widetilde}

\def\to{\longrightarrow}
\def\mapsto{\longmapsto}

\def\AA{\mathbb{A}}

\def\ZZ{\mathbb{Z}}

\def\bsr{{\boldsymbol{r}}}

\newcommand{\into}{\lhook\joinrel\longrightarrow}

\begin{document}
\title{On an ideal membership problem}

\author{Melvin Hochster}
\address{Department of Mathematics, East Hall, 530 Church St., Ann Arbor, MI 48109, USA}
\email{hochster@umich.edu}

\author{Jack Jeffries}
\address{Department of Mathematics, University of Nebraska-Lincoln, 203 Avery Hall, Lincoln, NE~68588, USA}
\email{jack.jeffries@unl.edu}

\author{Nawaj KC}
\address{Department of Mathematics, University of Utah, 155 South 1400 East, Salt Lake City, UT~84112, USA}
\email{nawaj.kc@utah.edu}

\author{Anurag K. Singh}
\address{Department of Mathematics, University of Utah, 155 South 1400 East, Salt Lake City, UT~84112, USA}
\email{singh@math.utah.edu}

\thanks{M.H. was supported by NSF grant DMS~2200501, J.J. by NSF CAREER award DMS~2044833, NSF grant DMS~2601940, and Simons travel grant TSM~00024960, and A.K.S. by NSF grant DMS~2349623.}

\begin{abstract}
Suppose $f_1, \dots, f_{n+1}$ are elements of a regular ring $R$, where $n \colonequals\dim R$. The first author proved that $f_1^n \cdots f_{n+1}^n \in (f_1^{n+1}, \dots, f_{n+1}^{n+1})R$; this uses the Brian\c con-Skoda theorem --- no simpler proof is known, as far as we are aware. We show here that this result is optimal in many respects. On the other hand, when the $f_i$ are homogeneous general polynomials of a fixed degree in $R\colonequals \mathbb{F}[x_1,\dots,x_n]$, for $\mathbb{F}$ a field of characteristic zero, we prove that $f_1 \cdots f_{n+1} \in (f_1^2, \dots, f_{n+1}^2)R$.
\end{abstract}
\maketitle

\section{Introduction}

Let $f_1, \dots, f_{n+1}$ be elements of a noetherian ring $R$, where $n \colonequals\dim R$. Since the local cohomology module $H^{n+1}_{(f_1,\, \dots,\, f_{n+1})}(R)$ vanishes, there exists an integer $k$ such that
\begin{equation}
\label{equation:main:ideal:membership}
\tag{$\dagger$}
f_1^k \cdots f_{n+1}^k\ \in\ (f_1^{k+1},\ \dots,\ f_{n+1}^{k+1})R.
\end{equation}
Note that if this equation holds for some fixed value of $k$, it also holds for any larger value. Without additional hypotheses on $R$, the smallest integer $k$ such that equation~\eqref{equation:main:ideal:membership} holds can be arbitrarily large in a fixed dimension~$n$, as is evident by considering the hypersurface
\[
\frac{\FF[x_1, \dots, x_{n+1}]}{\big(x_1^k \cdots x_{n+1}^k\ -\ \sum_{i=1}^{n+1} x_i^{k+1}\big)}
\]
and taking $f_i \colonequals x_i$. However, when $R$ is a \emph{regular} ring of dimension $n$, the first author proved, as an application of the Brian\c con-Skoda theorem~\cite{SB, Lipman:Teissier, Lipman:Sathaye, HH:JAMS}, that
\[
f_1^n \cdots f_{n+1}^n\ \in\ (f_1^{n+1},\ \dots,\ f_{n+1}^{n+1})R,
\]
i.e., that $k=n$ suffices in~\eqref{equation:main:ideal:membership}; see, for example,~\cite[page~29]{Hochster:615}. The present note explores the extent to which this bound is optimal:

For homogeneous \emph{general} polynomials of a fixed degree, Theorem~\ref{theorem:general} implies that the bound is far from optimal, since $k=1$ suffices when $\FF$ has characteristic zero. Some results specific to positive characteristic are recorded in Section~\ref{sec:small:char}, along with examples demonstrating the optimality of these. Section~\ref{section:dim:three} explores the case of polynomial rings of dimension three, while the optimality of the $k=n$ bound coming from the Brian\c con-Skoda theorem---for polynomial rings of positive prime characteristic---is established in Section~\ref{section:syzygies}; this uses the idea of syzygies of low degree.

Local rings in this paper are assumed to be noetherian.

\section{Homogeneous general polynomials}

\begin{theorem}
\label{theorem:general}
Fix positive integers $n$ and $d$. Let $R \colonequals \FF[x_1,\dots,x_n]$ be a polynomial ring over an infinite field $\FF$, and let $f_1, \dots, f_{n+1}$ be homogeneous general polynomials of degree~$d$. Suppose $\FF$ has characteristic zero or characteristic exceeding $2dn-n$. Then
\[
f_1 \cdots f_{n+1}\ \in\ (f_1^2,\ \dots,\ f_{n+1}^2)R.
\]
\end{theorem}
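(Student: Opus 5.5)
The plan is to prove something stronger: for general $f_i$, the ideal $I \colonequals (f_1^2,\dots,f_{n+1}^2)R$ already contains \emph{every} form of degree $D \colonequals (n+1)d$, i.e., $I_D = R_D$. Since $f_1\cdots f_{n+1}$ is homogeneous of degree $D$, the theorem follows.

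The first step is to reduce to a single special choice of forms. Regard the coefficients of $f_1,\dots,f_{n+1}$ as parameters. The condition $I_D = R_D$ says that the linear map
\[
\bigoplus_{i=1}^{n+1} R_{D-2d} \to R_D, \qquad (a_i)\mapsto \sum_i a_i f_i^2,
\]
is surjective. This is the condition that some maximal minor of a matrix is nonzero, and the entries of that matrix are polynomial in the coefficients. Hence the locus where $I_D=R_D$ is Zariski open, and it suffices to exhibit one point of it. Since $\FF$ is infinite, a nonempty open set contains the general point. I would take
\[
f_i = x_i^d \quad (1\le i\le n), \qquad f_{n+1} = \ell^d, \qquad \ell \colonequals x_1+\cdots+x_n,
\]
so that $I=(x_1^{2d},\dots,x_n^{2d},\ell^{2d})$.

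The second step is the surjectivity at this point, via the strong Lefschetz property. Set $A \colonequals R/(x_1^{2d},\dots,x_n^{2d})$. This is a monomial complete intersection, so its Hilbert function is symmetric and unimodal about its socle degree $N = n(2d-1) = 2dn-n$. The claim $I_D=R_D$ is equivalent to surjectivity of multiplication by $\ell^{2d}$ from $A_{D-2d}$ to $A_D$. By Stanley's theorem, $A$ has the strong Lefschetz property with Lefschetz element $\ell$ in characteristic zero. Therefore $\ell^{2d}\colon A_a\to A_{a+2d}$ has maximal rank for every $a$. By symmetry, this map is surjective exactly when $\dim A_a \ge \dim A_{a+2d}$, which holds when $a+(a+2d)\ge N$. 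Taking $a=D-2d=(n-1)d$ gives $a+(a+2d) = 2nd \ge 2dn-n = N$, so the map is surjective. (The same argument also covers degrees beyond the socle degree, where $A$ vanishes.)

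The main obstacle is the positive characteristic case. There one needs the strong Lefschetz property of $A$ with respect to $\ell$, at least for the map $\ell^{2d}$ in the relevant degrees, under the hypothesis $\operatorname{char}\FF > N = 2dn-n$. It is telling that this bound is exactly the socle degree. I would first try to invoke the known results that monomial complete intersections have the SLP in characteristic exceeding the socle degree (the $\mathfrak{sl}_2$-representation argument works in that range). Failing that, I would prove it directly. One writes $A=\bigotimes_i \FF[x_i]/(x_i^{2d})$. Multiplication by $\ell$ is then the tensor product of the nilpotent Jordan-block actions. One checks that the relevant Clebsch--Gordan decomposition, equivalently the nonvanishing of the relevant determinants, involves only binomial coefficients and integers bounded by $N$, so these remain nonzero modulo $p$ when $p>N$.
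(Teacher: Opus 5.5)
Your proposal is correct and is essentially the paper's proof. Both use the same Zariski-open condition via maximal minors, the same specialization $f_i = x_i^d$, $f_{n+1} = (x_1+\cdots+x_n)^d$, and the strong Lefschetz property of $A=R/(x_1^{2d},\dots,x_n^{2d})$ together with symmetry and unimodality of its Hilbert function, which give surjectivity of $f_{n+1}^2\colon A_{dn-d}\to A_{dn+d}$. In positive characteristic the paper does exactly what you suggest first: it cites the known result (Phuong--Tran) that this monomial complete intersection has the strong Lefschetz property when the characteristic exceeds the socle degree $2dn-n$, so your fallback $\mathfrak{sl}_2$ argument is not needed.
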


\begin{proof}
Let $\AA$ denote affine space of dimension
\[
(n+1)\binom{d+n-1}{n-1},
\]
parameterizing the coefficients of the $n+1$ polynomials $f_j$, each of degree $d$. It suffices to prove that there is a dense open subset of $\AA$ for which
\begin{equation}
\label{equation:degree:hilbert}
{[R/(f_1^2,\ \dots,\ f_{n+1}^2)R]}_{dn+d}\ =\ 0.
\end{equation}
Consider the matrix $M$ whose columns are indexed by the monomials of degree $dn+d$ in the indeterminates $x_i$, and the row entries are the coefficients that occur in $f^2_j\mu$ as~$\mu$ runs through the monomials of degree $dn-d$ in the $x_i$, and $1\le j\le n+1$. Specifically, the number of rows and columns of $M$ is
\[
r \colonequals (n+1)\binom{dn-d+n-1}{n-1} \quad\text{ and }\quad c \colonequals \binom{dn+d+n-1}{n-1}
\]
respectively. There is a Zariski \emph{open} set of polynomials~$f_j$, defined by the complement of the vanishing of the ideal of minors $I_c(M)$, for which \eqref{equation:degree:hilbert} holds, where we take $I_c(M)=0$ if $r<c$. It remains to show that this open set is \emph{nonempty}, hence dense.

If the field $\FF$ has characteristic zero, or characteristic greater than $2dn-n$, then this follows from \cite{Stanley} and~\cite[Theorem~3.1]{Phuong:Tran} as we justify next. Set
\[
f_1 \colonequals x_1^d,\ \dots,\ f_n \colonequals x_n^d, \quad f_{n+1} \colonequals (x_1+\dots+x_n)^d
\]
and
\[
A \colonequals R/(f_1^2,\ \dots,\ f_n^2)R,
\]
which is an artinian complete intersection ring. By the above references, the ring $A$ has the strong Lefschetz property, with the map
\begin{equation}
\label{equation:lefschetz}
\CD
{[A]}_{dn-d} @>f_{n+1}^2>> {[A]}_{dn+d}
\endCD
\end{equation}
having maximal rank. Since the Hilbert function of~$A$ is symmetric and unimodal, and the socle of $A$ lives in degree $2dn-n$, one has
\[
\rank_\FF {[A]}_{dn-d}\ \ge\ \rank_\FF {[A]}_{dn+d}
\]
so the map~\eqref{equation:lefschetz} must be surjective. But then 
\[
{[A/f_{n+1}^2A]}_{dn+d}\ =\ 0,
\]
so the Zariski open set $\AA\smallsetminus V(I_c(M))$ is indeed nonempty.
\end{proof}

In positive characteristic, the strong Lefschetz property can fail for monomial complete intersections, so the previous argument does not work in general. However, one has the following weaker result in arbitrary characteristic:

\begin{theorem}
\label{theorem:general:p}
Fix positive integers $n$ and $d$. Let $R \colonequals \FF[x_1,\dots,x_n]$ be a polynomial ring over an infinite field $\FF$, and let $f_1, \dots, f_{n+1}$ be homogeneous general polynomials of degree~$d$. Then, for $k\ge n/2$, one has
\[
f_1^k \cdots f_{n+1}^k\ \in\ (f_1^{k+1},\ \dots,\ f_{n+1}^{k+1})R.
\]
\end{theorem}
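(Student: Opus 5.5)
The plan is to reuse the openness argument from the proof of Theorem~\ref{theorem:general}, but to choose the special point of $\AA$ so that only a characteristic-free statement is needed. Set $e\colonequals (k+1)d$ and $D\colonequals (n+1)kd$. As before, it suffices to find one choice of $f_1,\dots,f_{n+1}$ with
\[
{[R/(f_1^{k+1},\ \dots,\ f_{n+1}^{k+1})R]}_{D}\ =\ 0 .
\]
Indeed, the locus where this holds is the complement of $V(I_c(M))$ for the analogous coefficient matrix $M$, so it is Zariski open. Once it is nonempty, a general choice gives $f_1^k\cdots f_{n+1}^k\in(f_1^{k+1},\dots,f_{n+1}^{k+1})R$, since the product has degree $D$.

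I would take $f_i\colonequals x_i^d$ for $i\le n$, and
\[
A\colonequals R/(x_1^{e},\ \dots,\ x_n^{e})R ,
\]
an artinian monomial complete intersection with socle degree $s\colonequals n(e-1)=nkd+nd-n$. The condition becomes surjectivity of multiplication by $g\colonequals f_{n+1}^{k+1}$ from ${[A]}_{D-e}$ to ${[A]}_{D}$. Since $A$ is Gorenstein, the pairing into ${[A]}_s$ is perfect, so this is equivalent to injectivity of
\[
g\colon {[A]}_{s-D}\ \to\ {[A]}_{s-D+e}.
\]
Here
\[
s-D\ =\ nd-n-kd\ \le\ nd-n-\tfrac{n}{2}d\ <\ e ,
\]
using $k\ge n/2$, and also
\[
s-D+e\ =\ nd-n+d\ \le\ 2kd+d-n\ <\ e+kd .
\]
So the source lies in degrees where $A$ agrees with $R$, and the target lies in degrees where the defining ideal is still small. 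Concretely, the ideal $(x_1^{e},\dots,x_n^{e})$ in degree $s-D+e$ consists only of terms $x_i^{e}\cdot(\text{stuff of degree } <kd)$. This degree bookkeeping is exactly where the hypothesis $k\ge n/2$ enters, and it is the point of passing to the dual statement.

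What remains is to choose $f_{n+1}$ of degree $d$ such that no nonzero form $h$ of degree $m\colonequals s-D\ (<e)$ satisfies $h f_{n+1}^{k+1}\in (x_1^{e},\dots,x_n^{e})$, and to do this in every characteristic. My first attempt would avoid binomial coefficients, which is where positive characteristic breaks Lefschetz arguments, by taking $f_{n+1}$ to be a monomial-like or triangular form. For instance I would try $f_{n+1}\colonequals x_1^{a_1}\cdots x_n^{a_n}$ with $\sum a_i=d$, or a sum of such monomials with distinct supports. I would then argue with a monomial order: the leading monomial of $h f_{n+1}^{k+1}$ is the leading monomial of $h$ times a fixed monomial $\nu$. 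The goal is to pick the $a_i$ so that $\mathrm{in}(h)\,\nu$ has all exponents $<e$ whenever $\deg h=m$. This requires $\nu$'s exponents, namely $(k+1)a_i$, to leave room $e-(k+1)a_i$ in each variable. Since $m<e$, a balanced choice of the $a_i$ should be attempted.

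For a pure monomial $f_{n+1}$, the product $h f_{n+1}^{k+1}$ is a combination of monomials, and injectivity fails only if some monomial of $h$ times $\nu$ has an exponent $\ge e$. So the key inequality to check is
\[
\max_i\big(m+(k+1)a_i\big)\ <\ e\quad\text{for a suitable choice of the } a_i .
\]
If this cannot be arranged for all $n,d$, my fallback is to take $f_{n+1}$ to be a sum of monomials in disjoint sets of about two variables each. The lower bound $k\ge n/2$ suggests pairing variables, with the rank count reducing to two-variable monomial complete intersections. For these, injectivity in low degree can be verified directly, and characteristic-freely, from the explicit structure of $\FF[y,z]/(y^{e},z^{e})$. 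I expect this choice of $f_{n+1}$, and verifying injectivity in every characteristic, to be the main obstacle. The reduction to the dual injectivity statement in low degree is routine.
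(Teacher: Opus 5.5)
Your first attempt is the paper's construction: $f_i=x_i^d$ for $i\le n$, and $f_{n+1}=x_1^{a_1}\cdots x_n^{a_n}$ balanced. Concretely, $a_i\in\{q,q+1\}$ where $d=qn+r$ with $0<r\le n$, so $\max_i a_i=q+1=\lceil d/n\rceil$. Your dual criterion is $m+(k+1)\max_i a_i<e$ with $m=nd-n-kd$. It rearranges to exactly the paper's inequality $(k+1)(q+1)-1+(n-1)\big((k+1)d-1\big)<dk(n+1)$. The paper gets there without dualizing: it notes that the left side is the top degree of a nonzero monomial in $R/(x_1^{e},\dots,x_n^{e},f_{n+1}^{k+1})R$.

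\textbf{The gap.} You never check this inequality. You call it ``the key inequality to check,'' plan a fallback in case it fails, and name it the main obstacle. But it is the whole content of the theorem (the lemma following the paper's proof). It, rather than the bounds $s-D<e$ and $s-D+e<e+kd$ you highlight, is where $k\ge n/2$ is genuinely used. For instance, with $n=5$, $k=2$ it fails once $d\ge 6$. Your worry about the characteristic is moot: with a monomial $f_{n+1}$ everything is characteristic-free, so the paired-variable fallback is unnecessary.

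\textbf{Closing the gap.} The check is short. With $c\colonequals\lceil d/n\rceil$, you need
\[
\Phi(k)\ \colonequals\ (2k+1-n)d+n-(k+1)c\ >\ 0 .
\]
This is linear in $k$ with slope $2d-c\ge d>0$, so it suffices to take $k=n/2$. Using $nc\le d+n-1$, one gets
\[
\Phi(n/2)\ =\ d+n-\tfrac{(n+2)c}{2}\ \ge\ \frac{(n-2)d+n^2-n+2}{2n}\ >\ 0
\]
for $n\ge 2$. For $n=1$ one has $\Phi(k)=(k-1)d+1>0$ directly. The paper reaches the same conclusion differently: it uses $q<d/n$ and bounds the threshold $F(n,d)$ by $\frac{n^2-n+1}{2n-1}\le n/2$.
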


\begin{proof}
We may assume that $n\ge 2$. As in the proof of Theorem~\ref{theorem:general}, it suffices to show that, for any given degree $d$ and $k\ge n/2$, there exist $f_1,\dots,f_{n+1}$ of degree $d$ such that 
\begin{equation}
\label{eq:lefschetz:p}
{[R/(f_1^{k+1},\ \dots,\ f_{n+1}^{k+1})R]}_{dk(n+1)}\ =\ 0.
\end{equation}
Write $d=qn+r$ with $0<r \le n$, and set
\[
f_1 \colonequals x_1^d,\ \dots, \ f_n \colonequals x_n^d, \quad
f_{n+1} \colonequals (x_1 \cdots x_r)^{q+1} (x_{r+1} \cdots x_n)^q.
\]
We claim that \eqref{eq:lefschetz:p} holds for this choice of elements $f_i$. Indeed, the highest degree of a nonzero monomial in $R/(f_1^{k+1},\ \dots,\ f_{n+1}^{k+1})R$ is
\[
D_{d,n,k} \colonequals (k+1)(q+1)-1 + (n-1)\big( (k+1)d-1 \big),
\]
as attained, for example, by $x_1^{(k+1)(q+1)-1}(x_2 \cdots x_n)^{(k+1)d-1}$, so it suffices to show that
\[
 D_{d,n,k} < dk(n+1)
\]
whenever $k\ge n/2$. This is the content of the following lemma:
\end{proof}

\begin{lemma}
Let $d,k,n,q$ be integers where $n\ge 2$, $k\ge n/2$, and $0\le q<d/n$. Then 
\[
(k+1)(q+1)-1 + (n-1)\big( (k+1)d-1 \big)\ <\ dk(n+1).
\]
\end{lemma}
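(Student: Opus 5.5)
We argue by direct algebraic manipulation: move everything to one side, simplify, and then bound the single term involving $q$ using $q<d/n$. Concretely, expand the left side:
\[
(k+1)(q+1)-1+(n-1)(k+1)d-(n-1) = (k+1)(q+1) + (n-1)(k+1)d - n .
\]
The right side is $dk(n+1) = dkn + dk$. Since $(n-1)(k+1)d = dkn - dk + dn - d$, the desired inequality is equivalent to
\[
(k+1)(q+1) + dn - d - n\ <\ 2dk,
\]
that is,
\[
(k+1)(q+1)\ <\ 2dk - dn + d + n .
\]

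Next we use $k\ge n/2$, i.e.\ $2k\ge n$, so $2dk - dn\ge 0$. Since the right side increases with $k$ faster (slope $2d$) than the left side (slope $q+1\le d$, because $q<d/n\le d/2$ gives $q+1\le d$), it suffices to check the inequality at the smallest admissible $k$. More cleanly, we subtract and write the difference as
\[
2dk - dn + d + n - (k+1)(q+1) = k(2d-q-1) - dn + d + n - q - 1,
\]
and bound $k(2d-q-1)\ge \tfrac n2(2d-q-1)$, using $2d-q-1>0$. This yields the lower bound
\[
dn - \tfrac{n(q+1)}{2} - dn + d + n - q - 1 \;=\; d + n - (q+1)\Big(\tfrac n2 + 1\Big).
\]

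It remains to show $d+n > (q+1)(n+2)/2$. Here we use $q<d/n$, or more usefully in integer form $qn\le d-1$. Then
\[
(q+1)(n+2) = qn + 2q + n + 2 \le d - 1 + 2q + n + 2 = d + n + 1 + 2q,
\]
so it suffices that $d+n+1+2q < 2d+2n$, i.e.\ $2q < d+n-1$. This holds because $2q\le qn\le d-1<d+n-1$ for $n\ge 2$. The strict inequality is the step requiring the most care. Integrality, $qn\le d-1$, gives the needed slack, and strictness survives because $n\ge 2$ leaves a gap of at least $n\ge 2$ in the last comparison.
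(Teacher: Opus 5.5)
Your proof is correct, but it takes a different and more elementary route than the paper's. Both arguments begin with the same simplification: your $(k+1)(q+1)<2dk-dn+d+n$ is equivalent to the paper's $k(2d-1)-(n-1)(d-1)>(k+1)q$.

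From there the paper eliminates $q$ first. Using $q<d/n$, it suffices that $k(2d-1)-(n-1)(d-1)\ge (k+1)d/n$. The paper rewrites this as $k\ge F(n,d)\colonequals \frac{d(n^2-n+1)-(n^2-n)}{d(2n-1)-n}$. It then shows $F(n,d)\le n/2$ by checking $\partial F/\partial d\ge 0$ and passing to the limit $(n^2-n+1)/(2n-1)$ as $d\to\infty$.

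You eliminate $k$ first instead. You use positivity of its coefficient $2d-q-1$ to substitute $k=n/2$. You then close the resulting inequality $2d+2n>(q+1)(n+2)$ directly from $qn\le d-1$ and $2q\le qn$.

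Your version avoids the derivative and the limit entirely, and it makes the slack explicit. The paper's version has a side benefit: it produces a sufficient threshold $k\ge F(n,d)$ that holds uniformly in $q$ and depends on $d$. For small $d$ relative to $n$ this threshold can be noticeably below $n/2$; for example, $F(6,2)=2$ while $n/2=3$. The paper itself only uses $F(n,d)\le n/2$.

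Incidentally, integrality is not needed in your final step. The bound $qn<d$ alone gives $(q+1)(n+2)<d+2q+n+2<2d+n+2\le 2d+2n$.
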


\begin{proof}
Consider the function
\[
F(n,d) \colonequals \frac{d(n^2-n+1)-(n^2-n)}{d(2n-1)-n}.
\]
Since
\[
\frac{\partial F}{\partial d}\ =\ \frac{n^2(n-2)}{(d(2n-1)-n)^2}
\]
is nonnegative for any fixed $n\ge 2$, the function $F$ is increasing in $d$. Hence
\[
F(n,d)\ \le\ \lim_{d\rightarrow\infty} F(n,d)\ =\ \frac{n^2-n+1}{2n-1}\ \le\ \frac{n}{2}.
\]
The desired inequality simplifies as
\[
k(2d-1)-(n-1)(d-1)\ >\ (k+1)q.
\]
Since $q<d/n$, it suffices to show that
\[
k(2d-1)-(n-1)(d-1)\ \ge\ (k+1)d/n,
\]
and this is equivalent to $k \ge F(n,d)$. 
\end{proof}

\section{Improved bounds in small characteristic}
\label{sec:small:char}

We record an improvement on the exponent $k$ for which \eqref{equation:main:ideal:membership} holds, in the case of regular rings of positive prime characteristic not exceeding the dimension of the ring:

\begin{theorem}
\label{theorem:small:characteristic}
Let $R$ be a regular ring of positive prime characteristic $p$; set $n \colonequals \dim R $. Then, for all $f_1,\dots, f_{n+1}\in R$, one has
\[
(f_1 \cdots f_n)^{p-1} f_{n+1}\ \in\ (f_1^p,\ \dots,\ f_{n+1}^p)R.
\] 
In particular, the ideal membership~\eqref{equation:main:ideal:membership} holds for $k\ge p-1$.
\end{theorem}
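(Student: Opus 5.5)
Our plan is to use flatness of Frobenius (Kunz's theorem) together with the vanishing of top local cohomology. After localizing at a maximal ideal, we may assume $R$ is regular local of dimension $n$: ideal membership is local, and the local dimension is at most $n$.

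Set $I \colonequals (f_1,\dots,f_{n+1})R$. Consider the \v{C}ech-type statement that the element $f_{n+1}$ times the class $\big[(f_1\cdots f_n)^{-1}\big]$ should vanish suitably. We instead argue directly with colon ideals. The key identity we aim for is
\[
(f_1^p,\dots,f_n^p)R :_R (f_1\cdots f_n)^{p-1}\ =\ \big((f_1,\dots,f_n)R\big)^{[p]} : (f_1\cdots f_n)^{p-1}\ \supseteq\ \big((f_1,\dots,f_n)R : ?\big).
\]
More cleanly: by flatness of Frobenius, for any ideal $J$ and element $g$ one has $J^{[p]}:g^p=(J:g)^{[p]}$. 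We want to show $(f_1\cdots f_n)^{p-1}f_{n+1}\in (f_1^p,\dots,f_{n+1}^p)$. Since $\dim R=n$, the $n+1$ elements $f_1,\dots,f_{n+1}$ satisfy $H^{n+1}_I(R)=0$. A more concrete route is needed, so the plan is as follows.

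\emph{Step 1.} Reduce to $f_{n+1}$ being a unit modulo nothing in particular. Instead, we work in $S\colonequals R_{f_{n+1}}$ versus $R/f_{n+1}^pR$. We use the element $u=(f_1\cdots f_n)^{p-1}$, which is the socle representative for $(f_1^p,\dots,f_n^p)$ over $(f_1,\dots,f_n)$. Thus $u\in (f_1^p,\dots,f_n^p):(f_1,\dots,f_n)$, at least when the $f_i$ form a regular sequence. The claim is therefore equivalent to
\[
f_{n+1}\ \in\ (f_1^p,\dots,f_n^p,f_{n+1}^p) : u.
\]

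\emph{Step 2.} Pass to $\bar R\colonequals R/f_{n+1}^pR$, which has dimension $n-1$ when $f_{n+1}$ is a nonzerodivisor (handle the zero case separately: then the statement is trivial). In $\bar R$ the $n$ elements $f_1,\dots,f_n$ are not parameters, so $H^n_{(f)}(\bar R)=0$. The class $\eta=[u/(f_1\cdots f_n)^p]=[1/(f_1\cdots f_n)]$ in the \v{C}ech module over $\bar R$ is then zero after enough powers. We need the precise statement that $f_{n+1}\eta=0$ at the first stage $p$. For this we use Frobenius on $R$: write $f_{n+1}\cdot[1/(f_1\cdots f_n)]$ in $H^n_{(f)}(R/f_{n+1}R)$. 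This is the image under Frobenius $F$ of an element killed by $f_{n+1}$. Frobenius is the map $R/f_{n+1}\to R/f_{n+1}^p$ sending $[1/\prod f_i]$ to $[u/\prod f_i^p]$, which is injective in the flat setting on local cohomology of $R$.

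\emph{Step 3} is the key step and the main obstacle. We must show that $[1/(f_1\cdots f_n)]\in H^n_{(f)}(R/f_{n+1}R)$ vanishes, or at least that its Frobenius image times $f_{n+1}^{\,?}$ lands correctly. Since $R/f_{n+1}R$ has dimension $n-1$, $H^n$ vanishes, so the class is $0$. Hence $(f_1\cdots f_n)^{t}\in (f_1^{t+1},\dots,f_n^{t+1},f_{n+1})$ for some $t$. Applying Frobenius (flat) turns this into a relation with $f_{n+1}^p$. The difficulty is getting from $t$ down to exponent $1$. I would try to show the class is already zero at level $t=0$ via the exact sequence $0\to R\xrightarrow{f_{n+1}}R\to R/f_{n+1}\to0$. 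This gives surjectivity $H^n_{(f)}(R)\xrightarrow{f_{n+1}} H^n_{(f)}(R)\to 0$, i.e.\ $H^{n}$ of $R$ is $f_{n+1}$-divisible.

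So write $[1/\prod f_i]=f_{n+1}\,\xi$ in $H^n_{(f)}(R)$. Applying Frobenius (an $R$-linear map $H^n_{(f)}(R)\to H^n_{(f)}(R)$ after base change, $p$-linear) gives
\[
[u/\textstyle\prod f_i^p]\ =\ f_{n+1}^p F(\xi).
\]
Multiplying by $f_{n+1}$ and tracking \v{C}ech levels should give $f_{n+1}u\in(f_1^p,\dots,f_n^p,f_{n+1}^p)$, possibly after a colon-capturing argument. This uses regularity (flatness of $F$) to ensure the relation descends to the finite level $p$ without higher powers of $f_1\cdots f_n$. The last assertion follows because $(f_1\cdots f_{n+1})^{p-1}$ is a multiple of $(f_1\cdots f_n)^{p-1}f_{n+1}$, and the conclusion for $k\ge p-1$ then follows by the monotonicity remark after \eqref{equation:main:ideal:membership}.
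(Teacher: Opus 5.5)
There is a genuine gap, and it is exactly the step you label ``the key step and the main obstacle''. The whole content of the theorem is the finite-level membership at exponent $p$, and Step~3 defers it to ``tracking \v{C}ech levels \dots\ possibly after a colon-capturing argument.'' What you actually establish cannot yield it:
the vanishing of $H^n_{(f)}(R/f_{n+1}R)$, equivalently the $f_{n+1}$-divisibility of $H^n_{(f)}(R)$, holds in every noetherian ring of dimension $n$. That includes the hypersurfaces of the introduction over a field of characteristic $p$, which still carry a Frobenius action on local cohomology and where the conclusion of the theorem fails once $k\ge p$. So flatness of Frobenius must enter through some specific identity, and you never say which.

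Your Frobenius computation is also off. With $\eta\colonequals[1/(f_1\cdots f_n)]$ one has $F(\eta)=[1/(f_1\cdots f_n)^p]$, whereas $[u/(f_1\cdots f_n)^p]$ is $\eta$ itself. Done correctly, $\eta=f_{n+1}\xi$ gives $f_{n+1}\eta=uf_{n+1}F(\eta)=f_{n+1}^p\cdot uf_{n+1}F(\xi)$. That is divisibility again and says nothing about the \v{C}ech level.

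Moreover, you never reduce to the case where $f_1,\dots,f_n$ is a system of parameters (the paper's Lemma~\ref{lemma:parameters}, via prime avoidance and Krull intersection). Without this, an equality of classes in $H^n_{(f)}(R)$ only gives membership after multiplying by an unspecified power of $f_1\cdots f_n$, which is precisely the exponent being controlled. Localizing also produces rings of dimension less than $n$; the paper handles these by induction on $n$, and you do not address them.

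The missing idea is a duality statement in the artinian Gorenstein ring $R/\fraka$, where $\fraka\colonequals(f_1,\dots,f_n)R$ is now a parameter ideal. The paper combines $\fraka:_R(\fraka:_Rf_{n+1})=\fraka+f_{n+1}R$ with $\fraka^{[p]}:_RJ^{[p]}=(\fraka:_RJ)^{[p]}$ (flatness of Frobenius). It then only remains to check $f_0^{p-1}f_{n+1}r^p=(f_0r)^{p-1}(f_{n+1}r)\in f_0^{p-1}\fraka\subseteq\fraka^{[p]}$ for $r\in\fraka:_Rf_{n+1}$, where $f_0\colonequals f_1\cdots f_n$.

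Your framework can be rescued, but only with the same input. Set $H\colonequals H^n_\frakm(R)$. Divisibility gives some $\zeta_0$ with $f_{n+1}^p\zeta_0=f_{n+1}\eta$, and you must replace $\zeta_0$, modulo $0:_Hf_{n+1}^p$, by an element of $0:_H\fraka^{[p]}=R\,F(\eta)$. This works for two reasons:
\begin{enumerate}
\item $0:_H\fraka^{[p]}+0:_Hf_{n+1}^p=0:_H(\fraka\cap f_{n+1}R)^{[p]}$, by Matlis duality plus flatness;
\item $(f_{n+1}r)^p\zeta_0=r^{p-1}(rf_{n+1})\eta=0$ for every $r\in\fraka:_Rf_{n+1}$.
\end{enumerate}
Injectivity of $R/\fraka^{[p]}\to H$, $s\mapsto[s/f_0^p]$, then turns $f_{n+1}\eta\in f_{n+1}^pR\,F(\eta)$ into $uf_{n+1}\in\fraka^{[p]}+f_{n+1}^pR$.
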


An immediate consequence:

\begin{corollary}
\label{corollary:characteristic:two}
Let $R \colonequals \FF[x_1,\dots,x_n]$ be a polynomial ring over a field $\FF$ of characteristic two. Then, for all $f_1,\dots,f_{n+1}\in R$, one has
\[
f_1 \cdots f_{n+1}\ \in\ (f_1^2,\ \dots,\ f_{n+1}^2)R.
\]
\end{corollary}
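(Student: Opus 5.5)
This is the case $p=2$ of Theorem~\ref{theorem:small:characteristic}, so the plan is simply to specialize that theorem. The ring $R=\FF[x_1,\dots,x_n]$ is regular, and it has positive prime characteristic $p=2$. It is also noetherian of Krull dimension $n$, so the hypotheses of Theorem~\ref{theorem:small:characteristic} hold with $\dim R = n$. For arbitrary $f_1,\dots,f_{n+1}\in R$, that theorem then gives
\[
(f_1\cdots f_n)^{2-1}f_{n+1}\ \in\ (f_1^2,\ \dots,\ f_{n+1}^2)R,
\]
and the left-hand side is exactly $f_1\cdots f_{n+1}$.

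The only step needing any care is the dimension bookkeeping. The theorem is stated with $n\colonequals\dim R$ and with $n+1$ elements, and this $n$ must agree with the number of variables. This holds because $\dim \FF[x_1,\dots,x_n]=n$ for any field $\FF$. No genericity assumption is needed, and $\FF$ need not be infinite, since Theorem~\ref{theorem:small:characteristic} applies to all elements of any regular ring of characteristic $p$.

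If a self-contained argument were wanted instead, I would use flatness of Frobenius (Kunz). Since $R$ is regular, $F\colon R\to R$ is flat, which gives
\[
(f_1^2,\dots,f_n^2):f_{n+1}^2 \;=\; \bigl((f_1,\dots,f_n):f_{n+1}\bigr)^{[2]}.
\]
I would then combine this with a local cohomology or colon-capturing argument in dimension $n$. The main obstacle in that route would be handling $n+1$ elements in an $n$-dimensional ring. However, all of this is already packaged in Theorem~\ref{theorem:small:characteristic}, so the corollary is immediate.
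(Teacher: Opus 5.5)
Your proposal is correct and matches the paper. The paper presents the corollary as an immediate consequence of Theorem~\ref{theorem:small:characteristic} with $p=2$, using $\dim \FF[x_1,\dots,x_n]=n$, exactly as you do.
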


When $R$ is local, the following lemma reduces the proof of Theorem~\ref{theorem:small:characteristic} to the case where the first $n$ elements $f_1,\dots,f_n$ form a system of parameters:

\begin{lemma}
\label{lemma:parameters}
Let $(R,\frakm)$ be a local ring of dimension $n$, and fix $F_0,\dots,F_m \in R[x_1,\dots,x_t]$. Consider the statement
\begin{equation}
\label{eqn:containment:specialization}
F_0(\bsr)\ \in\ \big(F_1(\bsr),\ \dots,\ F_m(\bsr)\big)R, \quad \text{ where } \quad \bsr \colonequals r_1,\dots,r_t\in \frakm.
\end{equation}

If $t\le n$, then \eqref{eqn:containment:specialization} holds for all $\bsr$ if and only if it holds whenever $\bsr$ is part of a system of parameters for $R$.

If $t>n$, then~\eqref{eqn:containment:specialization} holds for all $\bsr$ if and only if it holds whenever the first $n$ elements, namely $r_1,\dots,r_n$, form a system of parameters for $R$.
\end{lemma}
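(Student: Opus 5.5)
The "only if" directions are trivial, so the work is in showing that the special case implies the general one. The plan is a perturbation argument using Krull's intersection theorem. Fix an arbitrary $\bsr = r_1,\dots,r_t \in \frakm$, and put $J \colonequals (F_1(\bsr),\dots,F_m(\bsr))R$. Since $R/J$ is local, $\bigcap_N (J+\frakm^N) = J$. So it suffices to show that, for each $N$, there exist elements $r_i' \in \frakm$ with the following two properties:
\begin{enumerate}[(a)]
\item $r_i' - r_i \in \frakm^N$ for all $i$;
\item the tuple $\bsr'$ has the required parameter property.
\end{enumerate}
Suppose such $\bsr'$ exist. Then $F_j(\bsr') - F_j(\bsr) \in \frakm^N$ for every $j$, since each $F_j$ has coefficients in $R$ and $(a+b)^e - a^e \in bR$. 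The hypothesis gives $F_0(\bsr') \in (F_1(\bsr'),\dots,F_m(\bsr'))R$. Hence $F_0(\bsr) \in J + \frakm^N$. Letting $N$ vary yields $F_0(\bsr) \in J$.

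The main step is the construction of the perturbation. Fix a system of parameters $x_1,\dots,x_n$ for $R$; then $x_1^N,\dots,x_n^N$ is also a system of parameters and lies in $\frakm^N$. Consider first the case $t \le n$. We choose $r_i' \colonequals r_i + y_i$ inductively, with $y_i \in \frakm^N$, so that $r_1',\dots,r_i'$ is part of a system of parameters, meaning $\dim R/(r_1',\dots,r_i') = n-i$. At stage $i$ we have $i-1 < n$, so $\frakm^N$ is not contained in any minimal prime $\frakp$ of $(r_1',\dots,r_{i-1}')$ satisfying $\dim R/\frakp = n-i+1$. By the standard prime-avoidance-for-cosets lemma (Davis' lemma), there is $y_i \in \frakm^N$ such that $r_i + y_i$ lies outside all of these finitely many primes. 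For that choice the dimension drops by one. The case $t>n$ is handled the same way: we perturb only $r_1,\dots,r_n$ and set $r_i' = r_i$ for $i>n$. After $n$ steps the quotient $R/(r_1',\dots,r_n')$ has dimension $0$, so $r_1',\dots,r_n'$ is a system of parameters.

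The only place that needs care is this coset form of prime avoidance: if $a \in R$ and an ideal $I$ is not contained in any of the primes $\frakp_1,\dots,\frakp_s$, then $a+I \not\subseteq \bigcup_k \frakp_k$. Its hypothesis holds here because each $\frakp_k$ has $\dim R/\frakp_k \ge 1$. Such a prime is therefore not $\frakm$, and $\frakm^N \subseteq \frakp_k$ would force $\frakm \subseteq \frakp_k$, hence $\frakp_k = \frakm$, which is impossible. For the proof of the lemma itself, I would either cite it (Kaplansky, Theorem~124) or prove it by the usual induction on $s$, with the primes ordered so that none contains another.

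Finally, the dimension bookkeeping. Adding the element $r_i'$, which avoids every minimal prime of top dimension, lowers $\dim$ by exactly one. It cannot lower it by more, since $\dim R/(z_1,\dots,z_i) \ge n-i$ for any elements $z_1,\dots,z_i$ of a local ring. This completes the induction.
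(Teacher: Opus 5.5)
Your proposal is correct and follows the same route as the paper. You perturb $\bsr$ by elements of $\frakm^N$, using prime avoidance, to reach a (partial) system of parameters; you deduce $F_0(\bsr)\in(F_1(\bsr),\dots,F_m(\bsr))R+\frakm^N$; and you conclude with the Krull intersection theorem. You simply spell out what the paper compresses into ``using prime avoidance'', namely the coset form of prime avoidance and the dimension count, and your sentence fixing an auxiliary system of parameters $x_1^N,\dots,x_n^N$ is never used and can be dropped.
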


\begin{proof}
Suppose first that $t\le n$. Given elements $r_1,\dots,r_n\in R$ and an integer $N\ge 1$, using prime avoidance, one may construct a system of parameters $\tilde{r}_1,\dots,\tilde{r}_n$ such that ${\tilde{r}_i-r_i \in \frakm^N}$ for $1\le i\le n$. The statement
\[
F_0(\tilde{\bsr})\ \in\ \big(F_1(\tilde{\bsr}),\ \dots,\ F_m(\tilde{\bsr})\big)R,
\]
where $\tilde{\bsr} \colonequals \tilde{r}_1,\dots,\tilde{r}_t$, implies that
\[
F_0(\bsr)\ \in\ \big(F_1(\bsr),\ \dots,\ F_m(\bsr)\big)R + \frakm^N.
\]
As $N$ was arbitrary, the Krull Intersection Theorem yields $F_0(\bsr) \in \big(F_1(\bsr),\dots,F_m(\bsr)\big)R$. The case where $t>n$ is much the same.
\end{proof}

\begin{proof}[Proof of Theorem~\ref{theorem:small:characteristic}]
If the statement is false, take a counterexample with $n$ minimal, and let~$\frakp$ be a prime ideal containing
\[
(f_1^p,\dots,f_{n+1}^p)R:_R(f_1 \cdots f_n)^{p-1} f_{n+1}.
\]
Suppose $\dim R_\frakp$ has dimension $m<n$. Then
\[
(f_1 \cdots f_m)^{p-1} f_{m+1}\ \in\ (f_1^p,\dots, f_{m+1}^p)R_\frakp,
\]
so $(f_1 \cdots f_n)^{p-1} f_{n+1} \in (f_1^p,\dots, f_{n+1}^p)R_\frakp$, which is a contradiction. Thus, we have a counterexample in a regular \emph{local} ring of dimension $n$.

Assuming $R$ is local, by Lemma~\ref{lemma:parameters} we reduce to the case where $f_1,\dots,f_n$ is a system of parameters for $R$. Set $\fraka \colonequals (f_1,\dots,f_n)$ and $f_0 \colonequals f_1 \cdots f_n$. Note that
\[
\fraka:_R (\fraka:_R f_{n+1})\ =\ \fraka + f_{n+1}R
\]
by duality in the artinian complete intersection ring $R/\fraka$. Given $r \in \fraka:_R f_{n+1}R$, one has
\[
f_0^{p-1} f_{n+1}r^p\ =\ (f_0r)^{p-1} f_{n+1} r\ \in\ (f_0r)^{p-1}\fraka\ \subseteq\ \fraka^{[p]},
\]
i.e.,
\[
f_0^{p-1} f_{n+1}\, (\fraka:_R f_{n+1}R)^{[p]}\ \subseteq\ \fraka^{[p]}.
\]
It follows that $f_0^{p-1} f_{n+1}$ belongs to the ideal 
\[
\fraka^{[p]}:_R (\fraka:_R f_{n+1})^{[p]}
\ =\ \big(\fraka :_R (\fraka:_R f_{n+1}) \big)^{[p]}
\ =\ \big(\fraka + f_{n+1}R\big)^{[p]},
\]
where the first equality uses the flatness of the Frobenius endomorphism.
\end{proof}

Let $\FF$ be a field and set $R \colonequals \FF[x, y]$. For linear forms $f,g,h \in \FF[x,y]$, one sees readily that $fgh \in (f^2,\ g^2,\ h^2)R$. Given homogeneous polynomials $f, g, h\in R$ of degree $d \ge 2$, the optimal exponent in~\eqref{equation:main:ideal:membership} depends on the characteristic; when $\FF$ has characteristic two, Corollary~\ref{corollary:characteristic:two} implies that $fgh \in (f^2,\ g^2,\ h^2)R$, while this fails in every other characteristic:

\begin{example}
\label{example:dim:2}
Let $\FF$ be a field of characteristic other than two. We construct homogeneous elements $f,g,h \in R \colonequals\FF[x,y]$ of degree $d\ge 2$, for which $fgh \notin (f^2,\ g^2,\ h^2)R$. Set
\[
f \colonequals x^d-y^d, \quad g \colonequals x^d+y^d, \quad h \colonequals x^{d-1}y.
\]
Suppose, to the contrary, that $fgh \in (f^2,\ g^2,\ h^2)R$. Since $2$ is invertible in $\FF$, we have
\[
x^{d-1}yfg\ \in\ (f^2,\ x^dy^d,\ x^{2d-2}y^2)R.
\]
The elements $f^2$ and $x^{d-1}y$ form a regular sequence in $R$, so the above implies that
\[
fg\ \in\ (f^2,\ xy^{d-1},\ x^{d-1}y)R\ \subseteq\ (f^2,\ xy)R.
\]
Using next that $xy$ and $f$ form a regular sequence, one has
\[
g\ \in\ (f,\ xy)R,
\]
and hence, using once again that $2$ is invertible,
\[
x^d\ \in\ (f,\ xy)R.
\]
Lastly, $f$ and $x$ form a regular sequence, so
\[
x^{d-1}\ \in\ (f,\ y)R\ =\ (x^d,\ y)R,
\]
which is clearly false.
\end{example}

\begin{example}
\label{example:dim:2:mixed}
The bound from the Brian\c con-Skoda theorem is also optimal for mixed characteristic regular local rings of dimension two:
Let $p$ be a positive prime integer; set $R \colonequals \ZZ[x]_{(p,\,x)}$, which is a regular local ring of mixed characteristic $p$. As in Example~\ref{example:dim:2}, we shall see that $fgh \notin(f^2,\ g^2,\ h^2)R$, where
\[
f \colonequals x^2-p^2,\quad g \colonequals x^2+p^2,\quad h \colonequals 2px.
\]
Suppose, to the contrary, that $fgh \in (f^2,\ g^2,\ h^2)R$. Since $g^2-f^2=h^2$, one then has
\[
fgh\ \in\ (f^2,\ h^2)R.
\]
Since $f$ and $h$ form a regular sequence in $R$, the above implies
\[
g\ \in\ (f,\ h)R,
\]
equivalently
\[
2x^2\ \in\ (f,\ 2px)R.
\]
But $f$ and $2x$ also form a regular sequence, so
\[
x\ \in\ (f,\ p)R\ =\ (x^2,\ p)R,
\]
a contradiction.
\end{example}

\begin{example}
We next note that Corollary~\ref{corollary:characteristic:two} is optimal: for $\FF$ a field of characteristic other than two, there exist $f_1,\dots,f_{n+1}$ in $R \colonequals \FF[x_1,\dots,x_n]$ of degree $d\ge 2$, with
\[
f_1 \cdots f_{n+1}\ \notin\ (f_1^2,\ \dots,\ f_{n+1}^2)R,
\]
as we record here. Example~\ref{example:dim:2} handles the case $n=2$; suppose $n\ge 3$, take
\[
f_{n-1} \colonequals x_{n-1}^d-x_n^d, \quad f_n \colonequals x_{n-1}^d+x_n^d, \quad f_{n+1} \colonequals x_{n-1}^{d-1}x_n,
\]
and $f_i \colonequals x_i^d$ for $1\le i\le n-2$.
\end{example}

In Example~\ref{example:optimal} we shall see that for $n \ge 2$, and all suitably large integers $d$, there exist polynomials $f_1,\dots,f_{n+1}$ of degree $d$ in $\FFp[x_1,\dots,x_n]$ for which 
\[
(f_1 \cdots f_{n+1})^{n-1}\ \notin\ (f_1^n,\ \dots,\ f_{n+1}^n)R.
\]
These examples being in positive prime characteristic, one\footnote{This question appeared on MathOverflow in May 2026. We had been working on the questions in this manuscript since May 2025, and privately communicated some preliminary results and questions in March~2026; we discovered the MathOverflow post recently, which has compelled us to post this preliminary version.} may ask:

\begin{question}
\label{question:squares:cubes}
Fix $d\ge 2$. For homogeneous polynomials $f_1,f_2,f_3,f_4$ in $R \colonequals \CC[x_1,x_2,x_3]$ of degree $d$, does one always have 
\[
(f_1f_2f_3f_4)^2\ \in\ (f_1^3,\ f_2^3,\ f_3^3,\ f_4^3)R\,?
\]
\end{question}

The answer to Question~\ref{question:squares:cubes} is affirmative if $d\le 3$, as we record in the next section.

\section{Polynomial rings of dimension three}
\label{section:dim:three}

\begin{proposition}
\label{proposition:dim:3}
Let $\FF$ be a field; fix an integer $d \le 3$. If $f_1,f_2,f_3,f_4$ are homogeneous polynomials of degree $d$ in $R \colonequals \FF[x_1,x_2,x_3]$, then
\[
(f_1f_2f_3f_4)^2\ \in\ (f_1^3,\ f_2^3,\ f_3^3,\ f_4^3)R.
\]
\end{proposition}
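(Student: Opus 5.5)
The plan is a degree count in an artinian complete intersection, after reducing to the case where three of the $f_i$ form a system of parameters. Throughout, $\frakm \colonequals (x_1,x_2,x_3)$, we may enlarge $\FF$ to be infinite, and we write $P \colonequals (f_1f_2f_3f_4)^2$, which has degree $8d$.

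\emph{Reduction.} As in the proof of Theorem~\ref{theorem:small:characteristic}, let $\frakp$ be a homogeneous prime containing the colon ideal $(f_1^3,\dots,f_4^3)R:_R P$, and suppose $\dim R_\frakp\le 2$. Then the Brian\c con--Skoda bound of the introduction, for the regular ring $R_\frakp$ of dimension at most two, gives $k=2$ in~\eqref{equation:main:ideal:membership}. Hence $P\in(f_1^3,\dots,f_4^3)R_\frakp$, a contradiction. So it suffices to prove the membership after localizing at $\frakm$, where the graded setting lets us argue degreewise.

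If the $f_i$ have no three forming a system of parameters, their common zero locus has positive dimension. I would then treat these degenerate configurations separately: common factors, or all $f_i$ lying in a height-two ideal. In these cases I would try to show directly that the colon ideal has height at most two, so that the previous paragraph applies. Hence assume, after reindexing, that $f_1,f_2,f_3$ is a regular sequence.

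\emph{Degree count.} Set $A\colonequals R/(f_1^3,f_2^3,f_3^3)R$. This is an artinian complete intersection with socle degree $3(3d-1)-3 = 9d-6$. Since $P$ has degree $8d$, we have $8d>9d-6$ exactly when $d<6$. In particular, for $d\le 3$ the image of $P$ in $A$ lies beyond the socle degree and hence vanishes. Thus
\[
P\ \in\ (f_1^3,\ f_2^3,\ f_3^3)R\ \subseteq\ (f_1^3,\ \dots,\ f_4^3)R.
\]
In fact this argument appears to give the result for all $d\le 5$.

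\emph{Main obstacle.} The real work is the reduction step. Specifically, one must show that when no three of the $f_i$ form a regular sequence, the colon ideal cannot be $\frakm$-primary. The membership is not invariant under replacing the $f_i$ by linear combinations, so one cannot simply pass to general combinations. I would first try to use Lemma~\ref{lemma:parameters}, perturbing $f_1,f_2,f_3$ by elements of high order to obtain a system of parameters while keeping them homogeneous of degree $d$. The difficulty is that such perturbations must preserve homogeneity so that the degree count still applies. If that fails, I would instead perturb by adding $t\cdot x_i^d$ over $\FF(t)$ and take the limit $t\to 0$, using that membership in a fixed degree is a closed condition on the coefficients.
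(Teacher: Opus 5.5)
There is a genuine gap, and it starts with an arithmetic slip. The socle degree of $A=R/(f_1^3,f_2^3,f_3^3)R$ is $3(3d-1)=9d-3$, not $9d-6$; you subtracted the $3$ twice. So $8d>9d-3$ holds only for $d\le 2$. For $d=3$ the element $P$ sits exactly in the socle degree $24$, and this is where all the difficulty of the proposition lies.

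Your claim that the count covers all $d\le 5$ is refuted by Example~\ref{example:optimal} with $n=3$, $d=4$, $p=13$. There $x_1^4,x_2^4,x_3^4$ is a regular sequence, and your argument never used the characteristic.

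For $d=3$ with $f_1,f_2,f_3$ a regular sequence, $P\in(f_1^3,f_2^3,f_3^3)R$ is equivalent to $f_4^2\in(f_1,f_2,f_3)R$. This is a real condition in the one-dimensional socle of $R/(f_1,f_2,f_3)R$. The paper first disposes of characteristics $2$ and $3$ via Theorem~\ref{theorem:small:characteristic}. It then uses Gorenstein duality in degree $24$: if the membership fails, the cubes $f_i^3$ must be linearly dependent. After scaling, this gives a vanishing sum of two, three, or four cubes:
\begin{itemize}
\item a vanishing sum of two cubes is trivial;
\item a vanishing sum of three cubes is handled by a result of Hochster--Huneke, which puts one $f_i$ in the ideal generated by two others;
\item $f_1^3+\dots+f_4^3=0$ is impossible, since it would give a finite map from the cone over the Fermat cubic surface (a normal domain with class group $\ZZ^6$) to the UFD $R$.
\end{itemize}
Nothing in your proposal reaches this step.

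The reduction also has problems.
\begin{itemize}
\item \emph{False dichotomy.} It is not true that if no three $f_i$ form a system of parameters then $V(f_1,\dots,f_4)$ is positive-dimensional. For example, $xy,\ xz,\ yz,\ x^2+y^2+z^2$ generate an $\frakm$-primary ideal, yet no three of them form a system of parameters. The fix, used in the paper, is to take general linear combinations $g_1,\dots,g_4$ of the \emph{cubes} $f_i^3$. This changes neither the ideal nor $P$, and $g_1,g_2,g_3$ is a regular sequence whenever $(f_1,\dots,f_4)R$ has height three. Your objection only applies to combinations of the $f_i$ themselves.
\item \emph{Height two is missing.} Aiming to show the colon ideal has height at most two is circular: your localization step already shows that colon ideal is either $R$ or $\frakm$-primary. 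What localization really yields is membership in a saturation. The content is a vanishing of $H^0_\frakm$ in degree $8d$. The paper sets $\fraka=(g_1,g_2,g_3)R$, with $g_1,g_2$ a minimal reduction at the height-two minimal primes, and gets $P\in\fraka^{\sat}$ from Brian\c con--Skoda. It then kills $[H^0_\frakm(R/\fraka)]_{8d}$ through $\bar R=R/(g_1,g_2)R$, using $a(\bar R)=6d-3$ for $d\le2$ and an extra duality argument for $d=3$. Height one reduces to smaller degree by factoring out the common divisor.
\item \emph{Perturbation cannot work.} Membership in a fixed degree is a rank condition, not a closed one. By Theorem~\ref{theorem:general} and Example~\ref{example:dim:2}, in characteristic zero $fgh\in(f^2,g^2,h^2)\FF[x,y]$ holds on a dense open set of coefficients but fails at some points. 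Also, an expression over $\FF(t)$ may have poles at $t=0$.
\end{itemize}
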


\begin{proof}
We may assume that the field $\FF$ is algebraically closed. By Theorem~\ref{theorem:small:characteristic}, if $\FF$ has characteristic two or three, the stated containment holds for any four elements of $R$, so we may assume henceforth that $\FF$ has characteristic other than two or three. Suppose first that the $f_i$ generate an ideal of height three. Let $g_1,g_2,g_3,g_4$ be general linear combinations of~$f_i^3$, in which case $g_1,g_2,g_3$ form a regular sequence in $R$. The artinian ring
\[
R/(g_1, g_2, g_3)R
\]
has socle degree $9d-3$, while $(f_1f_2f_3f_4)^2$ has degree $8d$. Since $8d>9d-3$ if~$d<3$, we may assume that $d=3$. If the ideal membership in the proposition does not hold, then 
\[
R/(g_1, g_2, g_3, g_4)R
\]
also has socle degree $24$, so $g_4$ is a linear combination of $g_1$, $g_2$, $g_3$. But then the $f_i^3$ are linearly dependent; after replacing~$f_i$ by scalar multiples and renumbering, the possibilities are $f_1^3=0$, or $f_1^3+f_2^3=0$, or $f_1^3+f_2^3+f_3^3=0$, or $f_1^3+f_2^3+f_3^3+f_4^3=0$. The first two are easily handled; in the third case, it follows from \cite[5.6]{HH93} that $f_3\in (f_1,f_2)R$, so again the assertion holds. In the fourth case, the assignment $z_i\mapsto f_i$ defines a finite map
\[
\FF[z_1,z_2,z_3,z_4]/(z_1^3 + z_2^3 + z_3^3 + z_4^3)\ \to\ R.
\]
By our assumption on the characteristic, the hypersurface to the left is a normal domain with divisor class group $\ZZ^6$, while $R$ is a UFD, so such a finite map does not exist.

Suppose next that $\frakb \colonequals (f_1^3, f_2^3, f_3^3, f_4^3)R$ has height two. Take $g_1,g_2,g_3$ to be general linear combinations of the generators of $\frakb$, so that the minimal primes of $\fraka \colonequals (g_1,g_2,g_3)R$ and~$\frakb$ coincide, and $g_1,g_2$ gives a minimal reduction of $\frakb$ at each of these height two minimal primes $\frakp$. Next, note that $(f_1f_2f_3f_4)^2$ belongs to the integral closure of $\frakb^2R_\frakp$, so the Brian\c con-Skoda theorem gives
\[
(f_1f_2f_3f_4)^2\ \in\ \bar{(g_1,g_2)^2R_\frakp}\ \subseteq\ (g_1,g_2)R_\frakp\ \subseteq\ \fraka R_\frakp.
\]
It follows that $(f_1f_2f_3f_4)^2$ belongs to the saturation $\fraka^{\sat}$ of $\fraka$ with respect to the homogeneous maximal ideal $\frakm$ of $R$. It suffices to show that
\[
\fraka^{\sat}/\fraka\ =\ H^0_\frakm(R/\fraka)
\]
vanishes in degree $8d$. Set $\bar{R} \colonequals R/(g_1,g_2)R$, which is a complete intersection ring of dimension one. The exact sequence
\[
\CD
0 @>>> g_3\bar{R} @>>> \bar{R} @>>> R/\fraka @>>>0
\endCD
\]
yields
\[
H^0_\frakm(R/\fraka)\ \into\ H^1_\frakm(g_3\bar{R}),
\]
so it suffices to show that ${[H^1_\frakm(g_3\bar{R})]}_{8d}=0$. Next consider the exact sequence
\[
\CD
0 @>>> \frakc(-3d) @>>> \bar{R}(-3d) @>g_3>> g_3\bar{R} @>>> 0,
\endCD
\]
where $\frakc \colonequals (0:_{\bar{R}} g_3)$, and the induced sequence
\[
\CD
H^1_\frakm(\frakc)(-3d) @>>> H^1_\frakm(\bar{R})(-3d) @>g_3>> H^1_\frakm(g_3\bar{R}) @>>> 0.
\endCD
\]
Note that
\[
{[H^1_\frakm(\bar{R})(-3d)]}_{8d}\ =\ {[H^1_\frakm(\bar{R})]}_{5d}
\]
vanishes in view of $a(\bar{R})=6d-3$ if $d<3$. If $d=3$, one needs to further verify that
\[
\CD
{[H^1_\frakm(\frakc)]}_{15} @>>> {[H^1_\frakm(\bar{R})]}_{15}
\endCD
\]
is surjective. But the dual to this is the injective map
\[
\CD
{[\Hom_{\bar{R}}(\bar{R},\ \bar{R})]}_0 @>>> {[\Hom_{\bar{R}}(\frakc,\ \bar{R})]}_0,
\endCD
\]
where the identity on $\bar{R}$ maps to the inclusion $\frakc \into \bar{R}$.

Lastly, if $(f_1,f_2,f_3,f_4)R$ has height one, then the polynomials $f_i$ have a common factor. Factoring that out, we reduce to a case that has been previously handled.
\end{proof}

\section{Syzygies of low degree, and optimality}
\label{section:syzygies}

Set $R \colonequals \FF[x_1,\dots,x_n]$, where $\FF$ is a field, and $n\ge 2$. Suppose $f_1,\dots,f_{n+1}$ are homogeneous polynomials of degree $d$ in $R$, such that
\begin{enumerate}
\item $(f_1 \cdots f_{n+1})^{n-1} \notin (f_1^n,\dots,f_{n+1}^n)R$;
\item $f_1,\dots,f_n$ form a regular sequence in $R$;
\item $f_{n+1}^n \notin (f_1^n,\dots,f_n^n)$.
\end{enumerate}
Extending an argument used in Proposition~\ref{proposition:dim:3}, we first prove that $d>n$. The artinian complete intersection ring
\[
A \colonequals R/(f_1^n,\ \dots,\ f_n^n)R
\]
has socle degree $dn^2-n$, and (3) additionally forces ${[A/f_{n+1}^nA]}_{dn^2-n}=0$. But then (1) implies that
\[
d(n+1)(n-1)\ <\ dn^2-n,
\]
equivalently $d>n$.

This leads to the question: if $d>n$, are there polynomials satisfying (1), (2), (3) above? Such polynomials, when they exist, necessarily have a syzygy of low degree:

\begin{proposition}\label{proposition:syzygy}
Suppose $f_1,\dots,f_{n+1}$ are homogeneous polynomials of degree $d$ in the polynomial ring $\FF[x_1,\dots,x_n]$, satisfying conditions $(1)$, $(2)$, $(3)$. Then $f_1^n,\dots,f_{n+1}^n$ admit a nonzero syzygy of degree $d-n$. 
\end{proposition}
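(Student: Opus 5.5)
The plan is to use linkage (Gorenstein duality) in the artinian complete intersection $A \colonequals R/\fraka$, where $\fraka \colonequals (f_1^n,\dots,f_n^n)R$, much as in the proof of Theorem~\ref{theorem:small:characteristic}. By (2), $f_1^n,\dots,f_n^n$ is a regular sequence of forms of degree $dn$. So $A$ is a graded artinian Gorenstein ring with socle in degree $dn^2-n$, and
\[
\fraka:_R(\fraka:_R f_{n+1}^n)\ =\ \fraka+f_{n+1}^nR .
\]
Set $u\colonequals (f_1\cdots f_{n+1})^{n-1}$, which is homogeneous of degree $d(n^2-1)$. Condition (1) says $u\notin \fraka+f_{n+1}^nR$. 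By the duality above, there is then an element $r\in \fraka:_R f_{n+1}^n$ with $ur\notin\fraka$. Since all ideals involved are homogeneous, we may take $r$ homogeneous.

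Next, bound the degree of $r$. Since $ur\notin\fraka$, the image of $ur$ in $A$ is a nonzero homogeneous element, so its degree is at most the socle degree:
\[
d(n^2-1)+\deg r\ \le\ dn^2-n, \qquad\text{i.e.}\qquad \deg r\ \le\ d-n.
\]
In particular $r\neq 0$ and $d\ge n$; the discussion preceding the proposition gives $d>n$ anyway.

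Now produce the syzygy. Since $rf_{n+1}^n\in\fraka$, we can write $rf_{n+1}^n=\sum_{i=1}^n a_if_i^n$ with the $a_i$ homogeneous of degree $\deg r$. Thus $(a_1,\dots,a_n,-r)$ is a homogeneous syzygy of $f_1^n,\dots,f_{n+1}^n$ of degree $\deg r$, and it is nonzero because $r\ne0$. To get degree exactly $d-n$, multiply every entry by $x_1^{\,d-n-\deg r}$. This keeps the syzygy nonzero, since $R$ is a domain, and gives a nonzero syzygy of degree $d-n$.

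The only step needing care is the duality step, namely passing from $u\notin\fraka+f_{n+1}^nR$ to the existence of $r$. This is exactly the double-annihilator property $0:_A(0:_A J)=J$ for the ideal $J=f_{n+1}^nA$ of the artinian Gorenstein ring $A$, which is standard. Condition (3) is not needed in this argument beyond guaranteeing that the situation is nondegenerate: it ensures $f_{n+1}^n\notin\fraka$, so $\fraka:_R f_{n+1}^n$ is a proper ideal.
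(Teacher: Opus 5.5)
Your proof is correct and takes essentially the paper's approach: both use Gorenstein duality in $A=R/(f_1^n,\dots,f_n^n)R$. You phrase it through the double annihilator $0:_A(0:_A f_{n+1}^nA)=f_{n+1}^nA$, then bound $\deg r$ by the socle degree and pad with a power of $x_1$, whereas the paper dualizes the non-surjective map $A_{dn^2-d-dn}\to A_{dn^2-d}$ via the graded perfect pairing to obtain a non-injective map out of $A_{d-n}$ directly (and your remark that condition $(3)$ is never actually used is accurate).
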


\begin{proof}
As we have seen, $d \ge n+1$. Consider the artinian complete intersection ring
\[
A \colonequals R/(f_1^n,\ \dots,\ f_n^n)R.
\]
The hypotheses imply that the map
\[
\CD
A_{dn^2-d-dn} @>f_{n+1}^n>> A_{dn^2-d}
\endCD
\]
is \emph{not} surjective. The ring $A$ is Gorenstein, hence has a perfect pairing; since $A$ has socle degree $dn^2-n$, the above implies that 
\[
\CD
A_{d-n} \ \cong\ A_{dn^2-d}^* @>f_{n+1}^n>> A_{dn^2-d-dn}^*\ \cong\ A_{d-n+dn}
\endCD
\]
is not injective. Lifting a nonzero element of the kernel to a homogeneous form $h\in R_{d-n}$, we have $hf_{n+1}^n \in (f_1^n,\dots,f_n^n)R$.
\end{proof}

In Example~\ref{example:dim:2}, we have seen the existence of such syzygies of low degree: in characteristic other than two, if $d\ge 3$, then the polynomials $f \colonequals x^d-y^d$, $g \colonequals x^d+y^d$, $h \colonequals x^{d-1}y$ from that example satisfy the conditions (1), (2), (3), and, indeed, $f^2$, $g^2$, $h^2$ have a syzygy of degree $d-2$, namely
\[
x^{d-2}f^2\ -\ x^{d-2}g^2\ +\ 4y^{d-2}h^2\ =\ 0.
\]

We record elementary positive characteristic examples demonstrating the optimality of the bound coming from the Brian\c con-Skoda theorem in each dimension $n\ge 2$.

\begin{example}
\label{example:optimal}
Fix $n$; for all suitably large integers $d$, there exists a prime integer $p$ with
\[
dn\ <\ p\ \le dn+d-n.
\]
Fix such integers $d$ and $p$. Let $R \colonequals \FFp[x_1,\dots,x_n]$, and set $f_i \colonequals x_i^d$ for $1\le i\le n$, and
\[
f_{n+1} \colonequals \Big(\sum_{i=1}^n x_i\Big)^d.
\]
We claim that
\[
(f_1 \cdots f_{n+1})^{n-1}\ \notin\ (f_1^n,\ \dots,\ f_{n+1}^n)R.
\]
Assume the contrary, i.e., that
\[
(x_1 \cdots x_n)^{dn-d}({\textstyle\sum} x_i)^{dn-d}\ \in\ \big(x_1^{dn},\ \dots,\ x_n^{dn},\ ({\textstyle\sum} x_i)^{dn}\big)R.
\]
Multiplying by $(\sum x_i)^{p-dn}$ then gives
\[
(x_1 \cdots x_n)^{dn-d}({\textstyle\sum} x_i)^{p-d}\ \in\ \big(x_1^{dn},\ \dots,\ x_n^{dn},\ ({\textstyle\sum} x_i)^p\big)R
\ =\ \big(x_1^{dn},\ \dots,\ x_n^{dn}\big)R.
\]
Since $x_1,\dots,x_n$ form a regular sequence in $R$, the above implies that
\[
({\textstyle\sum} x_i)^{p-d}\ \in\ \big(x_1^d,\ \dots,\ x_n^d\big)R.
\]
The multinomial coefficients in the expansion of $({\textstyle\sum} x_i)^{p-d}$ are nonzero in~$\FFp$, so the pigeonhole principle implies that $p-d\ >\ n(d-1)$. This contradicts the assumption on the prime integer $p$.
\end{example}

\subsection*{AI disclosure}
Generative AI was used to search for examples and to proofread the paper; some of the results were assisted by the use of generative AI:

Theorem~\ref{theorem:small:characteristic} and the subsequent corollary were given by generative AI when instructed to proofread an earlier draft in which the $n=2$ case of Corollary~\ref{corollary:characteristic:two} was proven; the proof presented here was reworked by the authors. One specific case of Example~\ref{example:optimal} with $n=3$ was produced by generative AI; the agent was not able to produce an example in dimension three directly, but successfully returned an example after instructed to find one satisfying the conclusion of Proposition~\ref{proposition:syzygy}. Generative AI also produced some parts of the proof of Proposition~\ref{proposition:dim:3} when instructed to find the above examples.

Other results did not have any contribution from generative AI, nor was generative AI used in the writing of the paper.


\end{document}